\tikzset{%
every picture/.style={line width=1pt},%
every node/.style={line width=1.5pt,circle,fill=none,draw,inner sep=0,minimum size=13pt,font=\footnotesize},%
cnode1/.style={draw=color1,line width=1.5pt,regular polygon,regular polygon sides=3,fill=color1,inner sep=0,minimum size=13pt,font=\footnotesize},%
cnode2/.style={draw=color2,line width=1.5pt,regular polygon,regular polygon sides=4,fill=color2,inner sep=0,minimum size=13pt,font=\footnotesize},%
cnode3/.style={draw=color3,line width=1.5pt,regular polygon,regular polygon sides=5,fill=color3,inner sep=0,minimum size=13pt,font=\footnotesize}%
}
\definecolor{color1}{rgb}{0.62068966,0.06896552,1}
\definecolor{color2}{rgb}{0,0.5862069,0}
\definecolor{color3}{rgb}{0.75862069,0.20689655,0}
\newcommand*{\E}{\mathbb{E}}
\newcommand*{\Prob}{\mathbb{P}}
\newcommand*{\indicator}[1]{\mathbf1_{\{#1\}}}
\newcommand*{\cL}{{\mathcal L}}
\newcommand*{\hcL}{\hat{\mathcal L}}
\newcommand*{\pstar}{\vec{p}^{\hspace*{0.5ex}*}}
\newcommand*{\Dstar}{\vec{\Delta}^{*}}
\DeclareMathOperator{\aut}{Aut}
\newcommand*{\simp}[1]{#1_{\mathrm{simp}}}
\newtheorem*{theorem*}{Theorem}
\newtheorem{theorem}{Theorem}
\newtheorem{lemma}[theorem]{Lemma}
\newtheorem{cor}[theorem]{Corollary}
\newtheorem*{prop*}{Proposition}
\newtheorem{conj}[theorem]{Conjecture}
\begin{document}
\title[Counting proper colourings in 4-regular graphs]{Counting proper colourings in 4-regular graphs via the Potts model}

\author[E. Davies]{Ewan Davies}
\address{Korteweg-de Vries Institute for Mathematics, University of Amsterdam, P.O. Box 94248,  
1090 GE Amsterdam, The Netherlands}
\thanks{The research leading to these results has received funding from the European Research Council under the European Union's Seventh Framework Programme (FP7/2007-2013) / ERC grant agreement \textnumero{} 339109.}
\email{e.s.d.davies@uva.nl}
\date{\today}
\subjclass[2010]{Primary 05C15, 82B20; Secondary 90C35}
\keywords{Graph colouring, Potts model, graph homomorphisms}

\begin{abstract}
We give tight upper and lower bounds on the internal energy per particle in the antiferromagnetic $q$-state Potts model on $4$-regular graphs, for $q\ge 5$. 
This proves the first case of a conjecture of the author, Perkins, Jenssen, and Roberts, and implies tight bounds on the antiferromagnetic Potts partition function.  

The zero-temperature limit gives upper and lower bounds on the number of proper $q$-colourings of $4$-regular graphs, which almost proves the case $d=4$ of a conjecture of Galvin and Tetali.
For any $q \ge 5$ we prove that the number of proper $q$-colourings of a $4$-regular graph is maximised by a union of $K_{4,4}$'s. 
\end{abstract}

\maketitle
\thispagestyle{empty}

\section{Introduction}

The Potts model is a probabilistic model on assignments of interacting colours to the vertices of a graph. 
A temperature parameter controls how strongly the colours interact, and in the zero-temperature limit valid assignments of colours are proper $q$-colourings of the graph. 
For a graph $G$ we call a map $\sigma : V(G) \to [q]$ a $q$-colouring of $G$, and we let $m(\sigma)$ denote the number of monochromatic edges of $G$ under $\sigma$. 
The usual graph-theoretic notion of a \emph{proper colouring} is then such a map $\sigma$ with zero monochromatic edges. 
The $q$-state Potts model partition function is
\begin{align*}
Z_G^q(\beta)&= \sum_{\sigma :V(G)\to [q]} e^{ - \beta m(\sigma) }\,, 
\end{align*}
where $\beta$ is the \emph{inverse temperature} and for $\beta>0$ we say the model is \emph{antiferromagnetic}. 
For our purposes, this will mean that increasing $\beta$ biases the Potts model towards colourings with few monochromatic edges.

Using this partition function, the Potts model~\cite{potts1952some} (with no external field) is a random $q$-colouring $\sigma$ of $G$, chosen according to the distribution
\[ \sigma \mapsto \frac{ e^{-\beta m(\sigma)}   }{  Z_G^q(\beta)}\,.\]
Note that for $q=2$ this is known as the \emph{Ising model}~\cite{ising1925beitrag}. 
See~\cite{mezard2009information,wu1982potts} for more details on the Potts model. 

There is a strong link between the Potts model and extremal combinatorics, as in the limit $\beta\to +\infty$, the model converges to the uniform distribution on proper $q$-colourings of a graph, and $Z^q_G(\beta)$ tends to $c^q(G)$, the number of proper $q$-colourings of $G$.
Extremal questions on the number of $q$-colourings have received a lot of attention; see e.g.~\cite{linial1983legal,loh2010maximizing} for examples in dense graphs and~\cite{zhao2017extremal} for a survey (which also covers related problems) on regular graphs.
A conjecture of Galvin and Tetali~\cite{galvin2004weighted} states that over all $d$-regular graphs $G$ and $q\ge 3$, we have 
\begin{equation}\label{eq:Cconj}
\frac{1}{|V(G)|}\log c^q(G) \le \frac{1}{2d}\log c^q(K_{d,d})\,.
\end{equation}
Note that the case $q=2$ follows easily from the observation that $K_{d,d}$ is the smallest $d$-regular graph which is bipartite (that is, which admits a proper 2-colouring).

In this paper we obtain the above inequality via the Potts model, and a natural quantity to consider is the \emph{free energy per particle} $F^q_G(\beta) := \frac{1}{|V(G)|} \log Z^q_G(\beta)$.
The normalisation is convenient if one wishes to compare the number of $q$-colourings in graphs on different numbers of vertices. 
Inequality~\eqref{eq:Cconj} is the zero-temperature version of
\begin{equation}\label{eq:Fconj}
F^q_G (\beta ) \le F^q_{K_{d,d}}(\beta)\,,
\end{equation}
since in the limit $\beta\to+\infty$, inequality \eqref{eq:Fconj} becomes exactly~\eqref{eq:Cconj}.

For general $d$, Galvin~\cite{galvin2013maximizing,galvin2006bounding} showed that \eqref{eq:Fconj} holds for all $\beta$ when $G$ is bipartite, and \eqref{eq:Cconj} holds for all $d$ when $q\ge 2\binom{d|V(G)|/2}{4}$. 
Recently, for $d=3$ both~\eqref{eq:Cconj} and~\eqref{eq:Fconj} were shown to hold for all $q$ and $\beta>0$ by the author, Perkins, Jenssen, and Roberts~\cite{davies2016extremes}, using somewhat different techniques than in previous work.

The main difference between the techniques of~\cite{davies2016extremes} and previous work is the focus on an \emph{observable} (expectation over the Potts model) rather than the partition function itself. 
The expected number of monochromatic edges indeed an observable, and we scale by the number of vertices to obtain the \emph{internal energy per particle} $U^q_G(\beta)$, where
\begin{align*}
U^q_G(\beta)
&= \frac{1}{|V(G)|}\E  [m(\sigma)]\\
&= \frac{1}{|V(G)|} \frac{  \sum_{\sigma : V(G)\to  [q]} m(\sigma )e^{- \beta m(\sigma) } }{ Z_G^q(\beta)} \\
&= -\frac{1}{|V(G)|} \frac{1}{ Z_G^q(\beta)}\frac{\partial}{\partial\beta} Z_G^q(\beta)\\
&= -\frac{1}{|V(G)|} \frac{\partial}{\partial \beta} \log Z_G^q(\beta) = -\frac{\partial}{\partial\beta}F^q_G(\beta)\,.
\end{align*}

When $\beta=0$ there are no interactions between colours and  $Z_G^q(0) = q^{|V(G)|}$ for all $G$.  
Then to obtain the free energy per particle we can integrate the internal energy per particle,
\begin{equation}
\label{eq:integrateFormula}
F^q_G(\beta) = \log q -  \int_{0}^\beta U^q_G(t) \, dt\, .
\end{equation}

In \cite{davies2016extremes}, tight bounds on $U^q_G(\beta)$ were given for $3$-regular graphs for all $\beta>0$, leading to~\eqref{eq:Fconj} and~\eqref{eq:Cconj} for $d=3$ and all $q$, $\beta>0$. 
Here we extend the methods to $d=4$ under the additional restriction that $q\ge 5$, almost settling the $d=4$ case of \eqref{eq:Cconj}, the conjecture of Galvin and Tetali, as only the cases $q\in\{3,4\}$ remain. 

\begin{theorem}
\label{thm:d4}
For any $4$-regular graph $G$, any $q \ge 5$, and any $\beta >0$,
\[ U^q_{K_{4,4}}(\beta) \le U^q_G(\beta) \le U^q_{K_5}(\beta).\]
Equalities hold respectively if and only if $G$ is a union of $K_{4,4}$'s or a union of $K_5$'s. 
As a corollary via \eqref{eq:integrateFormula}, we have
\[ F^q_{K_5}(\beta) \le F^q_G(\beta) \le F^q_{K_{4,4}}(\beta).\]
\end{theorem}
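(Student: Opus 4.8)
The plan is to follow the strategy of \cite{davies2016extremes}, exploiting the local-observable method. The key insight is that $U^q_G(\beta)$ decomposes as an average over vertices (or edges) of a local quantity, so it suffices to prove a pointwise bound on the contribution of a single edge together with its immediate neighbourhood, and then argue that the extremal configurations are exactly those where every local neighbourhood looks like that in $K_{4,4}$ (for the lower bound) or $K_5$ (for the upper bound). Concretely, I would first write $\E[m(\sigma)] = \sum_{uv \in E(G)} \Prob[\sigma(u) = \sigma(v)]$, and then fix an edge $uv$ and condition on the colours of all vertices outside the closed neighbourhood $N[u] \cup N[v]$. This reduces the problem to analysing the Potts model on a small graph (the subgraph induced on $\{u,v\}$ plus their up to $6$ other neighbours, with a boundary condition) and bounding $\Prob[\sigma(u) = \sigma(v)]$ from above and below over all possible boundary conditions and all possible ways the $6$ external half-edges can be attached.

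The second step is the core optimisation. After conditioning, $\Prob[\sigma(u)=\sigma(v)]$ becomes an explicit rational function of $e^{-\beta}$ and of the ``effective fields'' $\lambda_i$ induced at each of the external neighbours by the frozen boundary — these fields live in a bounded region because each external vertex has bounded degree. I would parametrise everything, clear denominators, and reduce the desired inequalities to polynomial inequalities in $e^{-\beta} \in (0,1)$ and the field variables. The $K_{4,4}$ side corresponds to all external fields being ``aligned'' so that $u$ and $v$ are pushed apart (maximally bipartite-like), while the $K_5$ side corresponds to the fields being degenerate/symmetric so that $u,v$ and their neighbours behave like a clique. Showing these are the global optima over the allowed parameter range is where the condition $q \ge 5$ enters: for small $q$ the geometry of the field region is different and the clique can beat $K_{4,4}$ in unexpected ways, so one genuinely needs $q \ge 5$ to make the relevant polynomial discriminants have the right sign. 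I expect this polynomial-optimisation step — verifying a family of inequalities in two or more real variables, uniformly in $\beta$ — to be the main obstacle, and it will likely require either a clever change of variables exposing an SOS-type certificate or a careful case split on which boundary colours coincide with $\sigma(u)$.

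The third step is to assemble the global bound and characterise equality. Since $U^q_G(\beta) = \frac{1}{|V(G)|}\sum_{uv \in E(G)} \Prob[\sigma(u)=\sigma(v)]$ and $|E(G)| = 2|V(G)|$ for a $4$-regular graph, the pointwise bound on each edge term immediately yields $U^q_{K_{4,4}}(\beta) \le U^q_G(\beta) \le U^q_{K_5}(\beta)$, provided the pointwise bounds are normalised to match the values $U^q_{K_{4,4}}(\beta)$ and $U^q_{K_5}(\beta)$ (which one computes directly on those two graphs, where every edge neighbourhood is identical and there is no boundary). For the equality cases, one traces back through the optimisation: equality in the upper bound forces every edge's local neighbourhood, together with its boundary condition, to be in the $K_5$-extremal configuration, and a short combinatorial argument shows this propagates to force $G$ to be a disjoint union of $K_5$'s; symmetrically for $K_{4,4}$. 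Some care is needed because the conditioning introduces boundary fields that could in principle be extremal ``by accident'', so the rigidity argument must check that the only boundary conditions achieving the bound are themselves realised by the globally extremal graph.

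Finally, the corollary for the free energy is immediate: integrating the chain $U^q_{K_{4,4}}(t) \le U^q_G(t) \le U^q_{K_5}(t)$ from $0$ to $\beta$ and using \eqref{eq:integrateFormula} together with $F^q_G(0) = \log q$ for every $G$ reverses the inequalities (because of the minus sign in \eqref{eq:integrateFormula}), giving $F^q_{K_5}(\beta) \le F^q_G(\beta) \le F^q_{K_{4,4}}(\beta)$. No new ideas are needed here beyond monotonicity of the integral, and the equality discussion transfers verbatim since the integrand inequalities are strict off the extremal graphs on an interval of positive length.
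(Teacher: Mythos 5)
Your plan captures the high-level strategy (a local observable for $U^q_G$, conditioning on boundary colours via the spatial Markov property, reduction to finite polynomial inequalities, and the easy integration step for the free-energy corollary), but there is a genuine gap in the central step: you assert that a \emph{pointwise} bound holds, i.e.\ that for every local neighbourhood and every admissible boundary condition, the conditional expectation of the local observable lies between the $K_{4,4}$ value and the $K_5$ value. For the upper bound this does happen to hold --- the paper's maximisation program uses only the single constraint $\sum_L p_L = 1$, which amounts exactly to showing $K_5$ maximises the local contribution over all local views. But for the lower bound the pointwise bound is false: there exist local views whose conditional contribution to $U$ is strictly smaller than that of the $K_{4,4}$ local view. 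Your plan as written would therefore prove a lower bound that is weaker than $U^q_{K_{4,4}}(\beta)$ and the whole chain would break at that point.

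What the paper actually does is replace the pointwise claim by a linear programming relaxation. The distribution over local views induced by a graph cannot be arbitrary; it must satisfy consistency conditions. Specifically, for every $q$-partition $S$ of $d$, the probability that the multiset of colours on $N(v)$ has pattern $S$ must equal the average over $u\in N(v)$ of the probability that $N(u)$ has pattern $S$, because in a regular graph both $v$ and a uniformly random neighbour of $v$ are uniform over $V(G)$. These constraints cut down the feasible region so that the minimum is attained on the distribution arising from $K_{4,4}$, and this is certified by exhibiting a dual-feasible vector via LP duality. Even then the choice of which constraints to use is delicate --- the paper needs one set for $q=5$ and a different set for $q\ge 6$ --- and the verification of dual feasibility (non-negativity of the slack after multiplying by a ``magic factor'') is done with substantial computer assistance over the $3529$ equivalence classes of local views. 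Without the LP machinery and the consistency constraints, the ``polynomial optimisation'' in your second step has no hope of closing, because the target inequality is simply not true configuration-by-configuration; it is only true on average, after discarding distributions that no graph can realise.

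A secondary point: the paper centres its local view on a \emph{vertex} $v$ (looking at $\{v\}\cup N(v)$ and the colours observed at distance $2$), rather than on an edge $uv$ with its two-sided neighbourhood as you propose. Your decomposition $\E[m(\sigma)]=\sum_{uv\in E}\Prob[\sigma(u)=\sigma(v)]$ is legitimate, and an edge-centred local view could in principle be used, but it is not what the paper does; more importantly, switching to an edge-centred view does not rescue the pointwise claim, which fails for the same reason. If you want to salvage your outline, the missing ingredient is precisely the family of linear consistency constraints indexed by $q$-partitions of $d$ and the duality argument that shows $K_{4,4}$ is optimal subject to them.
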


This result gives~\eqref{eq:Cconj} by the following simple argument (also given in~\cite{davies2016extremes}).

\begin{lemma}
\label{lem:betatocolourings}
Fix positive integers $d$ and $q$.  If for all $d$-regular graphs $G$, and all $\beta >0$, we have $U^q_G( \beta) \ge U^q_{K_{d,d}}(\beta)$, then
\[ \frac{1}{|V(G)|}\log c^q(G) \le \frac{1}{2d}\log c^q(K_{d,d})\,. \]
\end{lemma}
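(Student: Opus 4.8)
The plan is to combine the integral identity \eqref{eq:integrateFormula} with the elementary fact that the free energy per particle converges to the normalised logarithm of the number of proper colourings in the zero-temperature limit. The hypothesis is a pointwise inequality between internal energies, which integrates to an inequality between free energies at every finite $\beta$; letting $\beta\to+\infty$ then delivers the bound on colour counts.

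First I would record the limiting behaviour of the free energy. For any graph $H$ and any $\sigma\colon V(H)\to[q]$ we have $e^{-\beta m(\sigma)}\to\indicator{m(\sigma)=0}$ (monotonically in $\beta$), and since $Z^q_H(\beta)$ is a finite sum this gives $Z^q_H(\beta)\to c^q(H)$, hence
\[ \lim_{\beta\to+\infty} F^q_H(\beta) = \frac{1}{|V(H)|}\log c^q(H)\,. \]
Applied to $H=K_{d,d}$ this limit equals $\tfrac{1}{2d}\log c^q(K_{d,d})$, which is finite because $K_{d,d}$ is bipartite (so $c^q(K_{d,d})\ge 2$ for $q\ge2$); and if $c^q(G)=0$ the conclusion of the lemma is trivial, so we may assume $c^q(G)\ge1$. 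I would also note that $U^q_H$ is continuous and bounded, taking values in $[0,|E(H)|/|V(H)|]$, so all the integrals below are finite.

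Next I apply the hypothesis directly. For every $t>0$ we have $U^q_G(t)\ge U^q_{K_{d,d}}(t)$, so for any $\beta>0$,
\[ \int_0^\beta U^q_G(t)\,dt \ge \int_0^\beta U^q_{K_{d,d}}(t)\,dt\,, \]
and substituting into \eqref{eq:integrateFormula}, where the additive constant $\log q$ is common to both graphs, yields $F^q_G(\beta)\le F^q_{K_{d,d}}(\beta)$ for all $\beta>0$. Letting $\beta\to+\infty$ and using the limit computed above on each side gives
\[ \frac{1}{|V(G)|}\log c^q(G) \le \frac{1}{2d}\log c^q(K_{d,d})\,, \]
as required.

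The argument has essentially no hard step: the only point that needs a moment's care is the justification that $F^q_H(\beta)$ converges to $\tfrac{1}{|V(H)|}\log c^q(H)$ and that this limit is finite for $H=K_{d,d}$, both of which follow immediately from bipartiteness of $K_{d,d}$ and the fact that the partition function is a finite sum whose terms converge termwise. Everything else is monotonicity of the integral.
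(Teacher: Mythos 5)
Your proposal is correct and follows essentially the same route as the paper: handle $c^q(G)=0$ separately, use the integral formula \eqref{eq:integrateFormula} together with the pointwise hypothesis on $U$ to compare free energies, and pass to the $\beta\to+\infty$ limit. The only cosmetic difference is that you compare $F^q_G(\beta)$ and $F^q_{K_{d,d}}(\beta)$ at finite $\beta$ before taking the limit, whereas the paper writes the improper integral $\int_0^\infty$ directly; the content is identical.
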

\begin{proof}
Let $G$ be any $d$-regular graph. If $c^q(G)=0$ then the conclusion clearly holds. Otherwise, we take logarithms and write
\begin{align*}
\frac{1}{|V(G)|} \log c^q(G) &= \lim_{\beta \to +\infty} \frac{1}{|V(G)|} \log Z^q_G(\beta) \\
 &=  \log q -  \int_{0}^\infty U^q_G(\beta) \, d\beta \\
 &\le \log q - \int_{0}^\infty U^q_{K_{d,d}}(\beta) \, d\beta \\
 &= \frac{1}{2d} \log c^q(K_{d,d})\,.\qedhere
\end{align*}
\end{proof}

Similarly, Theorem~\ref{thm:d4} also implies the lower bound
\[
\frac{1}{|V(G)|}\log c^q(G)\ge \frac{1}{5}\log c^q(K_5)
\]
for $4$-regular graphs $G$ when $q\ge 5$, but the analogous bound was proved for all $d$ and $q\ge d+1$ by Bezáková, Štefankovič, Vazirani, and Vigoda~\cite{bezakova2008accelerating}.

Bounding the number of colourings of a graph via the internal energy per particle of the Potts model has another advantage that was highlighted in~\cite{davies2017tight}. 
The main advance of~\cite{davies2017tight} is a general method for turning proofs of the kind given below (and similar ones cited throughout) into stability results and tight bounds on the individual coefficients of the partition function. 
In this context, the most interesting result of these methods is a stability version of the fact that unions of $K_{4,4}$ maximise the number of proper $q$-colourings (see Lemma~\ref{lem:betatocolourings}). 
For more details, and more general results about the coefficients of the Potts model partition function, one should combine the methods of~\cite{davies2017tight} with our Theorem~\ref{thm:d4}. 

To state the stability result we require a little notation. 
When $2d$ divides $n$, let $H_{d,n}$ be the graph consisting of $n/(2d)$ copies of $K_{d,d}$, and let $\delta_{\circ}(G,H)$ denote the sampling distance between the bounded degree graphs $G$ and $H$ (see~\cite{davies2017tight,lovasz2012large}). 
This is a natural measure of how similar $G$ and $H$ are, and for $4$-regular $G$ (in lieu of greater detail) we can loosely interpret $\delta_{\circ}(G,K_{4,4})$ as the proportion of vertices in $G$ that are contained in a $K_{4,4}$.

\begin{cor}
For all $q \geq 5$, there exists $\kappa(q)>0$ such that for any $n$ divisible by $8$ and any $4$-regular graph $G$ on $n$ vertices, we have
\begin{align*}
& \frac{1}{n} \log  c^q(G) \leq  \frac{1}{n} \log c^q(H_{4,n}) - \kappa(q)\cdot\delta_{\circ}(G,K_{4,4})
\end{align*}
\end{cor}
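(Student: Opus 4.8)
The corollary is the stability counterpart of the upper bound $\frac1n\log c^q(G)\le\frac1n\log c^q(H_{4,n})$ that Theorem~\ref{thm:d4} yields through Lemma~\ref{lem:betatocolourings}. The plan is to upgrade the lower bound $U^q_G(\beta)\ge U^q_{K_{4,4}}(\beta)$ of Theorem~\ref{thm:d4} to a \emph{robust} inequality whose slack is proportional to $\delta_\circ(G,K_{4,4})$ on a fixed range of $\beta$, and then to integrate exactly as in the proof of Lemma~\ref{lem:betatocolourings}. Concretely, the first goal is to show that there exist $0<a<b<\infty$ and $c=c(q)>0$ such that for every $4$-regular $G$ and every $\beta\in[a,b]$,
\[
U^q_G(\beta)-U^q_{K_{4,4}}(\beta)\ \ge\ c\cdot\delta_\circ(G,K_{4,4}),
\]
while $U^q_G(\beta)\ge U^q_{K_{4,4}}(\beta)$ for all $\beta>0$ holds already by Theorem~\ref{thm:d4}.

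To prove this robust inequality I would feed the proof of Theorem~\ref{thm:d4} into the transfer machinery of~\cite{davies2017tight}. That proof is \emph{local}: it writes $U^q_G(\beta)$, up to the normalisation by $|V(G)|$, as an average over the vertices $v$ of $G$ of a quantity $u_\beta(v)$ depending only on a bounded-radius neighbourhood of $v$ (equivalently, on finitely many local pair-correlation probabilities of the Potts model around $v$), and bounds each $u_\beta(v)$ from below by $U^q_{K_{4,4}}(\beta)$, with equality precisely when the local data at $v$ match those of a vertex of $K_{4,4}$. Two extra ingredients are then needed. First, a \emph{quantitative} non-degeneracy of this local bound: a defect $\Delta(v)\ge0$ that vanishes exactly at the $K_{4,4}$-profile and satisfies $u_\beta(v)-U^q_{K_{4,4}}(\beta)\ge c(\beta)\,\Delta(v)$, where $c(\beta)>0$ for $\beta>0$ and $c$ is bounded below by a positive constant on each compact subinterval of $(0,\infty)$; this follows from strict concavity and analyticity of the underlying entropy-optimisation problem together with compactness of the finite-dimensional space of admissible local data. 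Second, the comparison $\frac1n\sum_{v\in V(G)}\Delta(v)\ \gtrsim\ \delta_\circ(G,K_{4,4})$, which holds because $\delta_\circ$ is continuous with respect to the local neighbourhood statistics and $K_{4,4}$ is the unique graph attaining $\Delta\equiv0$. These are exactly the hypotheses the framework of~\cite{davies2017tight} is built to verify and to package into the displayed robust inequality.

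Granting the robust inequality, the corollary follows by integration. For $8\mid n$ we have $c^q(H_{4,n})=c^q(K_{4,4})^{n/8}$, so $\frac1n\log c^q(H_{4,n})=\tfrac18\log c^q(K_{4,4})=\log q-\int_0^\infty U^q_{K_{4,4}}(\beta)\,d\beta$ by the computation in the proof of Lemma~\ref{lem:betatocolourings}. Using~\eqref{eq:integrateFormula}, Theorem~\ref{thm:d4} on $(0,a)\cup(b,\infty)$, and the robust inequality on $[a,b]$,
\begin{align*}
\frac1n\log c^q(G)
&= \log q-\int_0^\infty U^q_G(\beta)\,d\beta\\
&\le \log q-\int_0^\infty U^q_{K_{4,4}}(\beta)\,d\beta-\delta_\circ(G,K_{4,4})\int_a^b c\,d\beta\\
&= \frac1n\log c^q(H_{4,n})-\kappa(q)\,\delta_\circ(G,K_{4,4}),
\end{align*}
with $\kappa(q):=c(q)\,(b-a)>0$.

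The main obstacle is the second paragraph: certifying that the proof of Theorem~\ref{thm:d4} is local in the precise sense demanded by~\cite{davies2017tight}, and, above all, that its local optimum is non-degenerate \emph{uniformly} over a compact range of $\beta$ (so that $\int_a^b c(\beta)\,d\beta>0$), while keeping the passage from the aggregated local defect $\frac1n\sum_v\Delta(v)$ to the global sampling distance $\delta_\circ(G,K_{4,4})$ quantitative. The pointwise inequality of Theorem~\ref{thm:d4} alone does not suffice: one needs the slack to be bounded below, proportionally to $\delta_\circ$, on a set of $\beta$ of positive measure that does not shrink as $\delta_\circ\to0$. Producing such a uniform range is precisely the content of the local robustness analysis, and it is exactly the work that the framework of~\cite{davies2017tight} carries out once the structure of the proof below is made explicit.
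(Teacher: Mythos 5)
Your high-level strategy --- upgrade the internal-energy inequality of Theorem~\ref{thm:d4} to a robust version with slack proportional to $\delta_\circ(G,K_{4,4})$ on a compact range of $\beta$, then integrate exactly as in Lemma~\ref{lem:betatocolourings} --- is indeed what the paper intends: the paper offers no explicit proof of this corollary and simply directs the reader to combine Theorem~\ref{thm:d4} with the transfer machinery of~\cite{davies2017tight}. Your final integration step is correctly executed, and you correctly identify that the whole weight of the argument rests on making the internal-energy bound quantitative on a range of $\beta$ of positive measure.

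However, your sketch of the local structure contains a concrete error that would mislead anyone trying to fill in the details. You assert that the proof of Theorem~\ref{thm:d4} ``bounds each $u_\beta(v)$ from below by $U^q_{K_{4,4}}(\beta)$, with equality precisely when the local data at $v$ match those of a vertex of $K_{4,4}$.'' This pointwise bound is false, and if it were true the linear programming machinery of Section~\ref{sec:d4proof} --- in particular the consistency constraints indexed by $\mathcal S_{4,q}$ --- would be superfluous: one could average a pointwise inequality directly. What the dual LP argument actually establishes is that the \emph{corrected} quantity
\[
\vec c_{L(v)} - \Dstar_{S_1}\vec\gamma^{S_1}_{L(v)} - \Dstar_{S_2}\vec\gamma^{S_2}_{L(v)}
\]
is at least $U^*=U^q_{K_{4,4}}(\beta)$ for every local view, with strict inequality (quantified by the positivity of~\eqref{eq:minobs}) when $L(v)$ does not arise in $K_{4,4}$; the correction terms $\Dstar_{S_i}\vec\gamma^{S_i}$ are individually nonzero but average to zero over $v$ by the consistency conditions. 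So the ``local defect'' $\Delta(v)$ that the framework of~\cite{davies2017tight} feeds into the stability result is the dual LP slack at $L(v)$, not $u_\beta(v)-U^*$, and the two differ in general. Relatedly, your passage from $\frac1n\sum_v\Delta(v)$ to $\delta_\circ(G,K_{4,4})$ via ``continuity and uniqueness'' is not, as stated, quantitative; the quantitative input is precisely that the slack~\eqref{eq:minobs} is a polynomial with positive coefficients and hence bounded away from zero uniformly over the finitely many non-$K_{4,4}$ local views and over $\beta$ in any fixed compact subinterval of $(0,\infty)$, which then compares to the sampling distance because $\delta_\circ$ is defined in terms of the very local-view frequencies $\vec p_L$ that carry the slack.
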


Finally, we suggest that it is not entirely surprising that the extremal problems we discuss are harder for smaller $q$, as in general much more is known about the Potts model for large $q$. 
The methods of Galvin~\cite{galvin2013maximizing} apply when $q\ge 2\binom{d|V(G)|/2}{4}$, and for example, one can efficiently approximate the number of proper $q$-colourings when $q\ge 11d/6$~\cite{vigoda2000improved} with a suitable randomised algorithm, but this is impossible when $q<d$ unless $\mathrm{NP}=\mathrm{RP}$, as it is $\mathrm{NP}$-hard to determine even whether a single proper $q$-colouring exists (when $q\ge3$)~\cite{garey1974simplified}. 

The rest of this paper is organised as follows. 
In Section~\ref{sec:method} we describe the method by which we bound the internal energy of the Potts model, noting that symmetry in the colours reduces the problem to a finite calculation. 
In section~\ref{sec:d4computer} we describe a practical implementation of the method by computer. 
This section contains the main advance of the paper: an application of efficient graph isomorphism and generation algorithms to the problem of generating the information necessary to carry out the method of Section~\ref{sec:method}.
Finally, in Section~\ref{sec:conc} we give some concluding remarks.

The computational techniques described in this paper are implemented in Cython\footnote{http://cython.org} code using libraries from the Sage\footnote{https://www.sagemath.org} computer algebra system, version 8.0. 
The source code is included with the paper as a collection of ancillary files.

\section{The method}\label{sec:method}

The general form of the method used here has been applied to problems of a similar nature; in~\cite{davies2017independent,davies2018average,perarnau2016counting} bounds on the number of independent sets or matchings in certain classes of graphs are obtained via bounds on an observable of the hard-core or monomer-dimer models, and a similar problem for the Widom--Rowlinson model was solved in~\cite{cohen2017widom}.
One can also obtain tight bounds on the individual coefficients of the partition function in certain cases~\cite{davies2017tight}. 
We now describe the method for the Potts model, which is essentially the same in this paper and in~\cite{davies2016extremes}. 
We leave the regularity of the graph $d$ as a parameter, but will only address the case $d=4$ in the proof.

\subsection{Outline}

Fix a $d$-regular graph $G$, positive integer $q$, and $\beta >0$. 
To bound $U^q_G(\beta)$, we considering the following experiment.
Let $v$ be a vertex of $G$ chosen uniformly at random, and independently sample a colouring $\sigma:V(G)\to [q]$ from the Potts model on $G$.
For each neighbour $u$ of $v$, record the number of its \emph{external neighbours} (neighbours outside $v \cup N(v)$) receiving each colour and record also any edges within $N(v)$. 
This gives a \emph{local view} of $\sigma$ from $v$.
Note that although we have sampled a colouring $\sigma$ of the whole graph, the colours of $v$ and its neighbours are not revealed. 
The graph structure amongst the external neighbours is not required, and so formally the local view is a graph on $d+1$ vertices such that for $d$ of the vertices we have a multiset of externally observed colours, see Figure~\ref{fig:L}. 

\begin{figure}[h]
\begin{tikzpicture}
	\def\bcs{{{1,2},{1,3},{1,1,2},{1,3,3}}}
	\def\bcss{{"1,2","1,3","1,1,2","1,3,3"}}
	\node (v) at (0,0) {$v$};
	
	\pgfmathsetmacro{\num}{4}
	\pgfmathsetmacro{\spa}{2.5}
	\pgfmathsetmacro{\len}{1.5}
	\foreach \i in {1,...,\num} {
		\pgfmathsetmacro{\spai}{(\i-1-(\num-1)/2)*\spa}
		\pgfmathsetmacro{\multiseti}{\bcss[\i-1]}
		\node[label=below:$\mathclap{[\multiseti]}$] (u\i) at ($ (\spai,-\len) $) {};
		\draw (v) -- (u\i);
	}
	\draw (u1) -- (u2);	

\end{tikzpicture}
\caption{A local view where the multisets of observed colours are shown below each neighbour of $v$.\label{fig:L}}
\end{figure}

An important part of the method is that, conditioned on the local view, the distribution of colourings of $v$ and its neighbours can be determined. 
This is a fundamental feature of the Potts model (and more generally any \emph{Gibbs measure}) known as the spatial Markov property, which can easily be derived from the definition of the distribution and partition function.
In our case, for any $v$ and the random colouring $\sigma$, conditioned on the colours of the external neighbours of $v$, the distribution of the colours on $\{v\}\cup N(v)$ is independent of the colours of any vertex at distance greater than $2$ from $v$. 

For fixed $d$ and $q$ there are is a finite set of possible local views which we denote $\cL_{d,q}$. 
By the above experiment, each $d$-regular graph $G$ and inverse temperature $\beta$ induces a probability distribution on $\cL_{d,q}$.
Not all probability distributions on $\cL_{d,q}$ can arise from a graph; there are certain consistency conditions that must hold. 
Here we consider a family of conditions identified in~\cite{davies2016extremes} but not needed for the case $d=3$.
Let $\mathcal S_{d,q}$ be the set of \emph{$q$-partitions} of $d$; that is, partitions of the form $d=a_1+a_2+\dotsb+a_k$ where $a_i\ge 1$ are positive integers and $k\le q$. 
Let $x\in V(G)$, and note that any $q$-colouring of $N(x)$ induces a $q$-partition of $d$ which we denote $H(x)$ and compute by counting the frequencies of each colour on $N(x)$. 
For example, if $N(x)$ is assigned the multiset of colours $[1,2,4,4]$, then $H(x) = 2+1+1$.  
The family of consistency conditions that we use to prove the lower bound in Theorem~\ref{thm:d4} is that for every $S \in \mathcal S_{d,q}$, the probability that for $v$ as in the local view, $\Prob(H(v)=S)$ is the same as the average over $u\in N(v)$ of $\Prob(H(u)=S)$. 
This holds for in any regular graph since both $v$ and a uniformly random neighbour are distributed uniformly over $V(G)$.
Note that because $a_i\ge 1$ in any partition $d=a_1+a_2+\dotsb+a_k$, we must have $k\le 4$, and hence $\mathcal S_{4,4}=\mathcal S_{4,q}$ for any $q\ge 5$. 
This simply means that our set of consistency conditions is bounded in size independent of $q$, which is important for our calculations.
These consistency conditions are the first way in which the method of this paper and the method of~\cite{davies2016extremes} differ. 
For the $d=3$ case addressed in~\cite{davies2016extremes}, simpler consistency conditions suffice which we do not discuss here. 

It turns out that $U^q_G(\beta)$ and the probabilities one must compute for these consistency conditions are linear functions of the probabilities that each local view appears in the random experiment. 
Then to maximise or minimise $U_G^q(\beta)$ over all $d$-regular graphs, we consider a linear programming relaxation of the problem, optimising over all probability distributions on $\cL_{d,q}$ that satisfy the above consistency conditions. 
For each fixed $d$, $q$, and $\beta$, this is a finite linear program with $|\cL_{d,q}|$ variables, but we wish to solve an infinite family of these programs indexed by $q$ and $\beta$, hence we cannot yet reach for a computer.

To prove Theorem~\ref{thm:d4} we show, separately for the minimum and the maximum, that the distributions corresponding to $K_{4,4}$ and $K_5$ respectively are feasible in the dual linear program. 
%For the minimum we do this twice, once for $q=5$ and once for $q\ge 6$. For the maximum a single argument works for all $q\ge 5$ (and in fact, none of the consistency conditions are needed for the maximum).
In~\cite{davies2016extremes} it was conjectured that when $d\ge 4$ and $q \ge d+1$, the linear program above is indeed minimised and maximised by distributions corresponding to $K_{d,d}$ and $K_{d+1}$ respectively, hence this paper confirms that conjecture for $d=4$.
%As in~\cite{davies2016extremes}, we rely on a computer to perform many simple calculations that are needed in the proof. 
%The major innovation of this paper (in comparison with~\cite{davies2016extremes}) is an efficient way of setting up and performing these calculations, see Section~\ref{sec:d4computer}. 
%A naive approach would almost certainly be infeasible today. 

\subsection{Linear programs}\label{sec:d4proof}

In this section we describe the linear programs which are used to prove Theorem~\ref{thm:d4}. 
Recall the process for sampling a local view: draw a colouring $\sigma: V(G)\to [q]$ according to the $q$-colour Potts model with inverse temperature $\beta$ and independently, uniformly at random choose a vertex $v\in V(G)$. 
The random local view $L$ consists of the induced subgraph of $G$ on $\{v\} \cup N(v)$, together with, for each $u\in N(v)$, the multiset of colours that appears in $N(u) \setminus \big(\{v\} \cup N(v)\big)$. 

To make the program of finite size independent of $q$ we consider equivalence classes of the local views under permutations of the colours. 
There are at most $d(d-1)$ vertices whose colours we record to form the local view, and all of the quantities we need to calculate are invariant under colour permutations, hence for $d=4$ we need only consider $12$ colours in the linear program. 
Let $\hcL_{d,q}$ be a subset of the possible local views $\cL_{d,q}$ that contains exactly one representative from each equivalence class. 
The construction of $\hcL_{4,q}$, which contains $3529$ local views, is one of the computational problems that required a nontrivial implementation to be achievable in a short time. 
See Section~\ref{sec:d4computer} for details.

Suppose that the local view $L$ arises from selecting the colouring $\sigma$ and the vertex $v$. 
We refer to the coloured external neighbours at distance two from $v$ as the \emph{boundary}, and write $V_L$ for the set of uncoloured vertices in $L$. 
The colouring $\sigma$ induces a \emph{local colouring} $\chi : {V_L}\to [q]$ that, by the spatial Markov property of the Potts model, is distributed according to the Potts model on $L$ (which includes some fixed colours on the boundary). 
For $\chi: V_L\to [q]$, write $m(\chi)$ for the total number of monochromatic edges in $L$ (including any monochromatic edges between $V_L$ and the boundary), and write $m_v(\chi)$ for the number of monochromatic edges in $L$ incident to $v$. 
Then, with the local partition function defined as
\begin{align}
Z_L^q(\beta) &:= \sum_{\chi: V_L\to [q]}e^{-\beta m(\chi)}\,,
\end{align}
the random local colouring $\chi: V_L \to [q]$ is distributed according to 
\[
\chi\mapsto\frac{e^{-\beta m(\chi)}}{Z_L^q(\beta)}\,.
\]

We can now write $U^q_G(\beta)$ as an expectation over the random local view $L$.
Each edge of $G$ is incident to exactly two vertices, hence
\begin{align}
U^q_G(\beta) &= \frac{1}{|V(G)|} \E_\sigma[m(\sigma)]\\
&= \frac{1}{2|V(G)|} \sum_{v\in V(G)}\sum_{u\in N(v)} \Prob_\sigma(\text{$uv$ monochromatic})\\
&= \frac{1}{2}\E_L\Big[\sum_{u\in N(v)} \Prob_\chi(\text{$uv$ monochromatic}|L)\Big]\\
&= \frac{1}{2}\E_{L,\chi}[m_v(\chi)]\,,
\end{align}
where the subscripts show which random variables are involved in the expectations and probabilities.

We define the vector $\vec{c}\in \mathbb{R}^{\hcL_{4,q}}$ with components
\begin{equation}\label{eq:cLdef}
\vec{c}_L := \frac{1}{2Z^q_L}\sum_{\chi:V_L\to[q]} m_v(\chi) e^{-\beta m(\chi)}\,,
\end{equation}
which is the objective for both our linear programs. 

To constrain the linear programs, recall that we consider the $q$-partitions $H(v)$ and $H(u)$ that appear in the local view. 
For $S\in\mathcal S_{4,q}$, we have
\begin{align}
\E_{L,\chi}\big[\indicator{H(v)=S}\big] = \E_{L,\chi}\left[\frac{1}{d}\sum_{u\in N(v)}\indicator{H(u)=S}\right]\,,
\end{align}
and for use in the linear program we define vectors $\vec\gamma^S\in \mathbb{R}^{\hcL_{4,q}}$ for $S\in\mathcal S_{4,q}$ with components
\begin{equation}\label{eq:gamLdef}
\vec\gamma^S_L := \sum_{\chi:V_L\to[q]}\left(\indicator{H(v)=S} - \frac{1}{d}\sum_{u\in N(v)}\indicator{H(u)=S}\right) e^{-\beta m(\chi)}\,,
\end{equation}
which give constraints for the linear program.

For $L\in\hcL_{4,q}$ and $S\in\mathcal S_{4,q}$, the quantities $\vec c_L$ and $\vec \gamma^S_L$ in~\eqref{eq:cLdef} and \eqref{eq:gamLdef} involve sums over local colourings $\chi:V_L\to[q]$. 
The fact that these sums grow in size with $q$ is a problem for the general case $q\ge5$ of Theorem~\ref{thm:d4}, but again we can solve this by considering permutations of the colours.
By permuting colours we only need consider $L$ whose boundary colours form an initial segment of $\{1,2,\dotsc\}$, and we write $q_L$ for the number of colours used on the boundary of $L$.
Since $|V_L|=d+1$ we only need $q_L+d+1$ colours to represent each equivalence class of local colourings $\chi$.
That is, for any quantity $f(\chi)$ which is invariant under permutations of the colours that are not used on the boundary of $L$, we have
\[
\sum_{\chi: V_L\to[q]}f(\chi) = \sum_{\chi\in\Omega_L}z(\chi)f(\chi)\,,
\]
where $\Omega_L$ is the set of maps $\{\chi: V_L\to [q_L+d+1]\}$ which use an initial segment (possibly empty) of the `extra colours' $\{q_L+1,\dotsc, q_L+d+1\}$, and
\[
z(\chi) = \binom{q-q_L}{\ell(\chi)-q_L}\,,
\]
is the size of the equivalence class of a map $\chi$ that uses largest colour $\ell(\chi)$. 
This idea turns the calculations giving $\vec c$ and $\vec\gamma^S$ into finite sums whose size is bounded independently of $q$.

Let $\vec{p}\in\mathbb{R}^{\hcL_{4,q}}$ be the variables of the primal programs, where $\vec{p}_L$ corresponds to the probability that a local view isomorphic to $L$ is sampled in the random process. 
In standard form, the primal linear programs in we wish to solve are
\begin{align}\label{eq:programs}
\begin{rcases}
U^{\min} = \min\;\\
U^{\max} = \max\;
\end{rcases}
\; \vec{p}\cdot \vec{c} \quad\text{s.t.}\quad A\vec{p} = \vec{b},\quad \vec{p}\ge \vec{0}\,,
\end{align}
where $A$ is the $|\hcL_{4,q}|\times \big(1+|\mathcal S_{4,q}|\big)$ matrix with first row all ones, and subsequent rows given by $\vec{\gamma}^S$ for $S\in \mathcal S_{4,q}$, and $\vec{b}$ is the vector $(1, 0, \dotsc, 0)$ with exactly $|\mathcal S_{4,q}|$ zeros.
Each row of $A$ together with corresponding entry from $\vec b$ corresponds to a constraint: the first row gives $\sum_L\vec{p}_L=1$, and subsequent rows give $\sum_L\vec{p}_L\vec{\gamma}^S_L=0$.
For concreteness, we consider the rows of $A$ and entries of $\vec b$ to be indexed by the set $\{1\}\cup \mathcal S_{4,q}$ with $1$ corresponding to the probability distribution constraint $\sum_L\vec{p}_L=1$ being first.

\subsection{Minimising}

We solve the minimisation program by considering the dual.
Linear programming duality~\cite{boyd2004convex} states that 
\begin{align}
U^{\min} = \max \vec\Delta\cdot \vec b\quad\text{s.t.}\quad A^T\vec \Delta \le \vec c,\quad \vec \Delta\ge \vec 0\,,
\end{align}
where $\vec \Delta$ is a vector of dual variables. 
Let $\pstar$ correspond to the distribution of the random local view in $K_{d,d}$, and $U^*=U^q_{K_{4,4}}(\beta)$. 
We know that $U^*$ is feasible for the primal program, which gives $U^{\min}\le U^*$, and if we find a vector of dual variables $\Dstar$ which are feasible for the dual program such that $\Dstar_1=U^*$, then the dual program shows $U^{\min} \ge \Dstar\cdot\vec b = U^*$ and hence $U^{\min}=U^*$ as required. 
Then it suffices to find a dual-feasible vector $\Dstar$ with $\Dstar_1=U^*$. 

It turns out we can solve for the dual variables uniquely provided we choose the minimal number of constraints.
In general for an optimum supported on $k$ local views, we can set all but $k$ dual variables to zero and solve the subsystem of the dual constraints corresponding to the $k$ local views with equality to obtain the remaining dual variables. 

In $K_{d,d}$ every vertex $v$ has exactly $d-1$ vertices at distance 2 which form the external neighbours of any local view from $v$, and each $u\in N(v)$ is adjacent to every one of these external neighbours, hence the local views which arise in $K_{d,d}$ with positive probability have the same multiset for each $u\in N(v)$. 
Up to isomorphism there are $|\mathcal S_{d-1,q}|$ such local views.
With $d=4$ and any $q\ge 5$ we have $1+|\mathcal S_{4,q}|=6$, but only need $|\mathcal S_{3,q}|=3$ constraints. 
Rather frustratingly there is no set of three constraints that work for the entire range of $q$, we must consider $q=5$ and $q\ge 6$ separately. 
In either case we select the first constraint corresponding to $\sum_L \vec p_L=1$, and two constraints corresponding to $q$-partitions in $\mathcal S_{4,q}$. 
For $q=5$ we use the partitions $4$ and $2+1+1$, and for $q\ge 6$ we use $2+1+1$ and $1+1+1+1$. 
In both cases we solve for the corresponding dual variables and verify dual feasibility.

In each case, we have a set $\mathcal L^*$ of three local views which arise in $K_{4,4}$, the probability distribution constraint, and two $q$-partitions $S_1,S_2\in \mathcal S_{4,q}$ giving the remaining constraints.
We set $\Dstar_1=U^*$, $\Dstar_S=0$ for $S\in\mathcal S_{4,q}\setminus\{S_1,S_2\}$, and find $\Dstar_{S_1}$, $\Dstar_{S_2}$ by solving the dual constraints corresponding to $L\in\mathcal L^*$ with equality.
That is, we take the rows of $A^T\Dstar\le\vec c$ corresponding to $L\in \mathcal L^*$, and solve the system of equations given by
\[
U^* + \Dstar_{S_1}\vec\gamma^{S_1}_L + \Dstar_{S_2}\vec\gamma^{S_2}_L = \vec c_L\qquad\forall L\in\mathcal L^*\,.
\]
Note that we actually have three equations for the two unknowns $\Dstar_{S_1}$ and $\Dstar_{S_2}$, but the system is indeed consistent.

After the vector $\Dstar$ has been found, dual feasibility reduces to the statement that the entries of the \emph{slack vector} $\vec c-A^T\Dstar$ are non-negative for all $L$. 
In fact we show that the optimising distribution $\pstar$ is unique by complementary slackness: we verify that the slack is strictly positive for all $L\notin\mathcal L^*$. 
Since unions of $K_{4,4}$'s are the only graphs that have distribution $\pstar$, these graphs are the only graphs attaining equality in the corresponding bounds in Theorem~\ref{thm:d4} and~\eqref{eq:Cconj}.

We verify this on a computer by substituting $e^\beta=1+t$ and either $q=5$ or $q=r+6$, and showing that for some `magic factor' $M>0$ which is independent of $L$, the quantity
\begin{equation}\label{eq:minobs}
M\cdot \left(\vec c_L - U^* - \Dstar_{S_1}\vec\gamma^{S_1}_L - \Dstar_{S_2}\vec\gamma^{S_2}_L \right)\,,
\end{equation}
is a polynomial with positive coefficients in $t$ or both $t$ and $r$, for all $L$ which do not occur in $K_{4,4}$. 
Note that $\beta>0$ corresponds to the range $t>0$, and $q\ge 6$ corresponds to $r\ge 0$.
The magic factor depends on the case $q=5$ or $q\ge 6$, and was found by hand. For $q=5$ we take
\[
\begin{split}
M = 6 t^2 (t+1)^{31}&(2t^2+6 t+5) (16 t^{10}+176 t^9+888 t^8+2676 t^7+{}\\
   5309t^6&+7260 t^5+6996 t^4+4760 t^3+2224 t^2+660 t+100)\,,
\end{split}
\]
and for $q\ge 6$ we have
\[
\begin{split}
M = 2 (r+3)& (r+4) (t+1)^{35} (r^4 t^9+9 r^4 t^8+36 r^4 t^7+84 r^4 t^6+126 r^4 t^5+{}\\&126
   r^4 t^4+84 r^4 t^3+36 r^4 t^2+9 r^4 t+r^4+11 r^3 t^9+105 r^3 t^8+{}\\&447 r^3 t^7+1117 r^3
   t^6+1809 r^3 t^5+1971 r^3 t^4+1445 r^3 t^3+{}\\&687 r^3 t^2+192 r^3 t+24 r^3+39 r^2 t^9+411
   r^2 t^8+1926 r^2 t^7+{}\\&5286 r^2 t^6+9393 r^2 t^5+11241 r^2 t^4+9090 r^2 t^3+4806 r^2
   t^2+{}\\&1512 r^2 t+216 r^2+51 r t^9+645 r t^8+3477 r t^7+10715 r t^6+{}\\&21096 r t^5+27816 r
   t^4+24812 r t^3+14580 r t^2+5184 r t+{}\\&864 r+18 t^9+342 t^8+2250 t^7+7954 t^6+17508
   t^5+{}\\&25428 t^4+24888 t^3+16200 t^2+6480 t+1296)\,.
\end{split}
\]
Given $M$ it is easy to use a computer algebra system to verify the assertion that~\eqref{eq:minobs} is a polynomial in $t$ with non-negative coefficients, and is zero if and only if $L$ arises in $K_{4,4}$.
Indeed, for each $L$ the verification is straightforward by hand, but requires lengthy calculations more suited to a computer.

\subsection{Maximising}

Note that $K_5$ itself is a valid local view that has no boundary, hence the optimum in the maximisation program is supported on a single local view. 
This means we wish to use a single constraint in the program, namely that $\sum_L p_L=1$. 
It suffices to verify that the largest entry of $\vec c$ corresponds to $L=K_5$, which we do by showing that for another `magic factor' $M'=2*(t + 1)^{25}$, and substitutions $e^b=1+t$, $q=r+5$, the quantity 
\[
M'\cdot\big(\vec c_{K_5} - \vec c_L\big)
\]
is a polynomial in $t$ and $r$ with positive coefficients for all $L\ne K_5$.
Again, this is simple to verify with a computer.

\section{Generating local views}\label{sec:d4computer}

The method of the previous section reduces an optimisation problem over the infinite set of $4$-regular graphs to the verification of a finite number of inequalities, which can be done by computer. 
The main difficulty in establishing Theorem~\ref{thm:d4} is now to enumerate all local views so that this verification can be performed.

\subsection{Representing local views as a simple graph}

Formally, the local view is a graph on $d+1$ vertices where an identified vertex $v$ has degree $d$ and for the other vertices we record a multiset of observed colours. 
For ease of use with standard graph isomorphism algorithms, we represent the entire local view as a simple graph with no coloured vertices. 
A local view already gives us the graph structure on $\{v\}\cup N(v)$, and we add to this graph a set $B$ of pendant boundary vertices to the neighbours of $v$. 
Each neighbour $u$ of $v$ is degree $4$ in the original graph, so we add enough pendant vertices such that this holds in the representation of the local view. 
Now we can represent the multiset of observed colours by colouring the pendant boundary vertices, but we wish to have a simple graph representation that uses no colours. 
So to represent the colours of the boundary we add a set $C$ of auxiliary \emph{colour vertices} and an edge from each $b\in B$ to exactly one $c\in C$ to represent $b$ being coloured by colour $c$. 
Note that with this definition, any local view in $\hcL_{4,q}$ can be represented by with at most $|B|\le 12$ colour vertices. 
See Figure~\ref{fig:Lrep} for the simple graph representation of the local view pictured in Figure~\ref{fig:L}. 
We denote by $\simp L$ this representation of $L\in\hcL_{q,d}$.

\begin{figure}[h]
\begin{tikzpicture}
	\def\bcs{{{1,2},{1,3},{1,1,2},{1,3,3}}}
	\node (v) at (0,0) {};

	\pgfmathsetmacro{\num}{3}
	\pgfmathsetmacro{\spa}{2}
	\pgfmathsetmacro{\len}{4}
	\foreach \i in {1,...,\num} {
		\pgfmathsetmacro{\spai}{(\i-1-(\num-1)/2)*\spa}
		\node (c\i) at ($ (\spai,-\len) $) {};
	}
	\pgfmathsetmacro{\num}{7}
	\pgfmathsetmacro{\spa}{1}
	\pgfmathsetmacro{\len}{5}
	\foreach \i in {1,...,\num} {
		\pgfmathsetmacro{\spai}{(\i-1-(\num-1)/2)*\spa}
		\pgfmathsetmacro{\j}{int(\i+3)}
		\node at ($ (\spai,-\len) $) {};
	}
	
	\pgfmathsetmacro{\num}{4}
	\pgfmathsetmacro{\spa}{2.5}
	\pgfmathsetmacro{\len}{1.5}
	\foreach \i in {1,...,\num} {
		\pgfmathsetmacro{\spai}{(\i-1-(\num-1)/2)*\spa}
		\node (u\i) at ($ (\spai,-\len) $) {};
		\draw (v) -- (u\i);
	}
	\draw (u1) -- (u2);	
	
	\foreach \i in {1,2} {
    	\pgfmathsetmacro{\num}{2}
    	\pgfmathsetmacro{\spa}{1}
    	\pgfmathsetmacro{\len}{1}
    	\foreach \j in {1,...,\num} {
    		\pgfmathsetmacro{\spai}{(\j-1-(\num-1)/2)*\spa}
			\pgfmathsetmacro{\c}{\bcs[\i-1][\j-1]}
    		\node (u\i\j) at ($ (u\i) + (\spai,-\len) $) {};
    		\draw (u\i) -- (u\i\j);
			\draw (u\i\j) -- (c\c);
    	}
	}

	\foreach \i in {3,4} {
    	\pgfmathsetmacro{\num}{3}
    	\pgfmathsetmacro{\spa}{0.75}
    	\pgfmathsetmacro{\len}{1}
    	\foreach \j in {1,...,\num} {
    		\pgfmathsetmacro{\spai}{(\j-1-(\num-1)/2)*\spa}
			\pgfmathsetmacro{\c}{\bcs[\i-1][\j-1]}
    		\node (u\i\j) at ($ (u\i) + (\spai,-\len) $) {};
    		\draw (u\i) -- (u\i\j);
			\draw (u\i\j) -- (c\c);
    	}
	}
	\draw[decorate,decoration={brace,amplitude=5pt}]
			(5.5,0.375) -- (5.5,-0.375) node [draw=none,midway,anchor=west,xshift=0.25cm] {\footnotesize $v$};
	\draw[decorate,decoration={brace,amplitude=5pt}]
			(5.5,-1.125) -- (5.5,-1.875) node [draw=none,midway,anchor=west,xshift=0.25cm] {\footnotesize $N(v)$};
	\draw[decorate,decoration={brace,amplitude=5pt}]
			(5.5,-2.125) -- (5.5,-2.875) node [draw=none,midway,anchor=west,xshift=0.25cm] {\footnotesize $B$};
	\draw[decorate,decoration={brace,amplitude=5pt}]
			(5.5,-3.75) -- (5.5,-5.25) node [draw=none,midway,anchor=west,xshift=0.25cm] {\footnotesize $C$};
\end{tikzpicture}
\caption{A representation of the same local view as in Figure~\ref{fig:L}, but where colours on boundary vertices are indicated by edges to auxiliary colour vertices. There are 10 boundary vertices, hence we draw 10 colour vertices, 7 of which are unused.\label{fig:Lrep}}
\end{figure}

We will enumerate the simple representations of local views as follows.
\begin{enumerate}
\item
Start with a copy of $K_{1,d}$ which is the vertex $v$ and its $d$ neighbours. 
\item
Add any collection of edges inside $N(v)$, and add pendant boundary vertices to each $u\in N(v)$ such that each $u\in N(v)$ has degree $d$.
\item
Add a set of colour vertices $C$ of size $|B|$, and colour the boundary vertices by adding, for each $b\in B$, exactly one edge to a vertex in $C$.
\end{enumerate}
Provided the possible edges inside $N(v)$ and the possible colourings are enumerated exhaustively in steps~(2) and~(3), this process gives at least one representative of every possible local view and is easy to implement on a computer. 
Note that by virtue of being represented on a computer we are forced to consider a labelling of the vertices, so a naive implementation of step~(2) involves generating every \emph{labelled graph} on $d$ vertices. 
Similarly, a naive implementation of step~(3) will consider all functions from the boundary $B$ to the colour vertices $C$. 
This is highly inefficient\footnote{For example, when $d=4$ each triangle-free local view has $12$ boundary vertices to colour with up to $12$ colours.
There are $12^{12}=8\,916\,100\,448\,256$ such colourings, but in fact only $1636$ isomorphism classes of triangle-free local view.} since the quantities we are interested in are independent of the labelling of $N(v)$ and invariant under permutations of the colours. 
Exploiting these symmetries drastically reduces the computation required for steps~(2) and~(3).

Using the simple graph representation, the generation of $\hcL_{d,q}$ for any $d$ is now an instance of a well-studied general problem: we want a single labelled representative of each isomorphism class of some unlabelled combinatorial objects (the graphs $\simp L$ for $L\in\hcL_{d,q}$) under the action of some symmetry group (permuting the neighbours of $v$ and permuting colours). 
In this paper we use a canonical generation algorithm of McKay~\cite{MCKAY1998306} to achieve this efficiently, and an advantage of the simple graph representation is that all the symmetries we must consider arise as graph automorphisms, facilitating the use of pre-existing routines for such problems.

\subsection{Generating graphs}

We first give an efficient version of step (2) of the local view generation algorithm. 
We want to generate a single labelled version of each graph on $d$ vertices in order to enumerate the possible underlying graphs of the local views. 

There is a natural `construction tree' for this process.
The root is the empty labelled graph on $d$ vertices, and the children of any node $X$ are the labelled graphs obtained from $X$ by adding one edge. 
Every leaf in the tree is therefore the complete labelled graph on $d$ vertices. 
McKay's method works by generating only those $X$ that have a `canonical construction path' in this tree. 
A key ingredient is a \emph{canonical labelling function} that, given a labelled graph, outputs an isomorphic labelled graph, and which is constant on equivalence classes of labelled graphs. 
Such a function can be used to identify a single, canonical representative of an unlabelled graph $G$: given an arbitrary initial labelling of $G$, the function produces a fixed labelling that depended only on $G$ and not the initial labelling. 
These functions are well-studied, since any such function solves the \emph{graph isomorphism problem}: testing whether two graphs are isomorphic reduces to checking equality of their corresponding canonical labellings.

For completeness, we describe a basic example of a canonical labelling function due to Read~\cite{read1978every}, and independently Farad\v{z}ev~\cite{faradzev1978generation}, here. 
For practical purposes we use a more efficient algorithm of McKay~\cite{mckay1981practical} which exploits much more graph structure, specifically the implementation supplied with Sage 8.0. 
The Read--Farad\v{z}ev function relates each labelled graph with vertex set $[n]$ to the binary sequence $a(G)$ of length $\binom{n}{2}$, where $a(G)_i=1$ if the $i$-th pair in the lexicographic ordering of $\binom{[n]}{2}$ is an edge of $G$, and $a(G)_i=0$ otherwise. 
This gives an ordering $\prec$ where $G\prec H$ when $a(G)< a(H)$ in the lexicographic ordering of binary sequences. 
The canonical labelling of $G$ is then the labelling which is maximal in the ordering $\prec$, and can be computed by checking every permutation of the vertices of $G$.

Armed with a canonical labelling function, we give in Figure~\ref{alg:canaug} a short description of McKay's algorithm for canonical graph generation. 
When \Call{scan}{$\overline K_d$} is called, the algorithm explores a subtree of the construction tree for labelled graphs on $d$ vertices and outputs exactly one labelled copy of each graph on $d$ vertices.

\newcommand*{\Yield}{\State\textbf{yield} }
\newcommand*{\Let}{\State\textbf{let} }
\newcommand*{\cLet}{\State\hphantom{\textbf{let} }}
\begin{figure}[htb]
  \begin{algorithmic}[0]
    \Statex
    \Procedure{scan}{$X$}
      \Yield $X$
      \If{$X=K_d$} \Return\EndIf
      \Let $U^*$ contain one representative of each isomorphism class
      \cLet of non-edges of $X$ under the action of $\aut(X)$
      \For{$e \in U^*$}
      	\Let $Y= X + e$, and $Y^*$ be the canonical labelling of $Y$
        \Let $y^*$ be the last edge of $Y^*$, and $y$ be its relabelling in $Y$
        \Let $m(Y)$ be the orbit of $y$ under the action of $\aut(Y)$
      	\If{$e\in m(Y)$}
			\Call{scan}{$Y$}
		\EndIf
      \EndFor
    \EndProcedure
  \end{algorithmic}
\caption{A canonical augmentation algorithm which yields a single labelled representative of all unlabelled graphs on $d$ vertices.\label{alg:canaug}}
\end{figure}

As one can see from the algorithm, an explored node $X$ is output (yielded) and then a set of representatives $U^*$ of possible `next edges' is computed. 
This can be done by computing the orbit of every possible next edge under the action of $\aut(X)$, and picking the least edge of each orbit in the lexicographic order on $\binom{[d]}{2}$. 
Here is the first use of symmetry to prune the construction tree: we only consider one representative of each equivalence class of `next edge' $e$. 
The next step is to ensure that we only recursively explore the subtree rooted at $Y=X+e$ when $e$ is, up to isomorphism, the `last edge' of $Y$. 
Here we take the `last edge' of $Y$ to be the last edge of the canonical labelling $Y^*$ of $Y$ in the lexicographic order on $\binom{[d]}{2}$.
Thus, we observe that the algorithm explores a subtree of the construction tree. 
We omit a more detailed analysis of the algorithm that would constitute a full proof that that exactly one labelled copy of each unlabelled graph is generated, see~\cite{MCKAY1998306} for details.

\subsection{Colouring graphs}

The same algorithm can be used to generate colourings by restricting the edges one adds to the graph. 
For local views, the idea is to start with the necessary graph structure on $\{v\}\cup N(v)\cup B$ with the colour vertices $C$ added as isolated vertices. 
Then colouring a vertex $b\in B$ with colour $c\in C$ corresponds to adding an edge between $b$ and $c$ in the graph. 
For brevity we say a vertex in $B$ is coloured to mean it has exactly one edge from it to a vertex in $C$.

This again gives a construction tree: the root is a graph of the form $\simp L$ but with no edges from $B$ to $C$, and the children of each node are the graphs obtained by adding exactly one edge from an uncoloured boundary vertex to a colour vertex.
The algorithm in Figure~\ref{alg:canaug} can be slightly modified to output produce exactly one labelled representative of every possible colouring of the initial graph structure. 
Here we write $\aut(X)$ to mean the subgroup of graph automorphisms of the local view representation $X$ that fixes $\{v\}$, $N(v)$, $B$, and $C$ as sets.

\begin{figure}[htb]
  \begin{algorithmic}[0]
    \Statex
    \Procedure{colour}{$X$}
      
      \If{all vertices in $B$ are coloured}
      \Yield $X$
      \State\Return
      \EndIf
      \Let $U^*$ contain one representative of each isomorphism class
      \cLet of non-edges from uncoloured boundary vertices to colour
      \cLet vertices under the action of the subgroup of $\aut(X)$
      \For{$e \in U^*$}
      	\Let $Y= X + e$, and $Y^*$ be the canonical labelling of $Y$
        \Let $y^*$ be the last edge of $Y^*$, and $y$ be its relabelling in $Y$
        \Let $m(Y)$ be the orbit of $y$ under the action of $\aut(Y)$
      	\If{$e\in m(Y)$}
			\Call{colour}{$Y$}
		\EndIf
      \EndFor
    \EndProcedure
  \end{algorithmic}
\caption{A canonical augmentation algorithm for colouring the boundary of local views\label{alg:col}}
\end{figure}

The changes compared to Figure~\ref{alg:canaug} are that yielding $X$ is only done when all boundary vertices are coloured, and that the edges we consider adding to $X$ are only those from uncoloured boundary vertices to $C$.

\subsection{Notes}

In this section we justify some features of the above computational methods that might seem peculiar to the experienced user of the algorithms of McKay. 

Our representation of local view seems wasteful in that to represent the externally observed colours for each $u\in N(v)$ we could omit the pendant boundary vertices and record edges from $u$ to auxiliary colour vertices. 
To obtain this representation from the one given above one simply contracts the pendant edges from $u\in N(v)$ to its adjacent boundary vertices. 
This would be a more compact representation, but we would require in some cases multiple edges from the neighbours of $v$ to auxiliary colour vertices. 
For ease of use with existing implementations of algorithms and data structures that do not allow for multiple edges between two vertices in a graph, we chose the representation given above. 

In addition, McKay's work~\cite{mckay1981practical} on isomorphisms naturally allows for coloured vertices to be considered, and can be used to find automorphisms of labelled graphs that preserves the colour classes as sets. 
The reason we do not use this facility directly here is that in the algorithms of McKay one does not allow automorphisms that exchange entire colour classes. 
The trick we use here with auxiliary colour vertices that can be exchanged (amongst themselves) is a standard way of getting around this problem. 
When calling McKay's algorithm as in Figure~\ref{alg:col}, we use four McKay-style non-interchangeable colour classes $\{v\}$, $N(v)$, $B$, and $C$, so that the group $\aut(X)$ computed there fixes these sets, as needed for our application.

\section{Conclusions}\label{sec:conc}

This paper demonstrates that the approach conjectured in~\cite{davies2016extremes} works for $d=4$, and gives reasonably efficient implementations of computationally expensive tasks that are required for the proof. 
The fact that the minimisation program used here (and first discussed in~\cite{davies2016extremes}) has optimum given by $K_{4,4}$ as desired is some evidence that the approach may work in general for $d\ge 3$ and $q\ge d+1$. 

We leave for further work the tasks of trying $d=5$ with this method, noting that the computational techniques described here make no essential use of $d=4$ and can be applied to larger $d$ immediately, albeit slowly.
For completeness, we repeat the relevant conjecture from~\cite{davies2016extremes} here, noting that it is open for $d\ge 5$.

\begin{conj}[Davies, Jenssen, Perkins, Roberts]
The minimisation program~\eqref{eq:programs} (see Section~\ref{sec:d4proof}) has optimum given by $K_{d,d}$ for $d\ge 3$, $q\ge d+1$ and $\beta>0$, which shows that $U^q_{K_{d,d}}(\beta)\le U^q_{G}(\beta)$ for all $d$-regular graphs $G$.
\end{conj}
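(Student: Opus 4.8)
The plan is to run the argument behind Theorem~\ref{thm:d4} with the parameter $d$ left general, so that the conjecture is reduced to a family of explicit polynomial inequalities; the real difficulty is to handle that family uniformly in $d$.

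Concretely, one first writes down the program~\eqref{eq:programs} for arbitrary $d$, keeping the $q$-partition consistency conditions: variables $\vec p\in\mathbb R^{\hcL_{d,q}}$, objective $\vec c$ from~\eqref{eq:cLdef}, and constraints $\vec\gamma^S$, $S\in\mathcal S_{d,q}$, from~\eqref{eq:gamLdef}, together with $\sum_L\vec p_L=1$. Passing to the dual as in the proof of the $d=4$ case, it suffices to exhibit, for every $d\ge3$, $q\ge d+1$ and $\beta>0$, a feasible dual vector $\Dstar\ge\vec{0}$ with $\Dstar_1=U^q_{K_{d,d}}(\beta)$. The natural candidate is supported on the $|\mathcal S_{d-1,q}|$ local views that occur in $K_{d,d}$ (those in which every $u\in N(v)$ sees the same multiset of $d-1$ boundary colours); one sets all other dual variables to zero, solves the dual constraints indexed by this support set with equality to pin down the remaining dual variables, and then verifies that the slack vector $\vec c-A^T\Dstar$ is non-negative on the whole of $\hcL_{d,q}$, and strictly positive off the support in order to recover the uniqueness/stability part. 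For each fixed $d$ this is precisely the finite check performed here for $d=4$ (and in~\cite{davies2016extremes} for $d=3$), so all the genuine content of the conjecture lies in making this check $d$-uniform.

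That uniformity is the main obstacle. In the present paper the non-negativity of each slack entry is certified by multiplying by a ``magic factor'' --- a polynomial in $t=e^\beta-1$ and $r=q-(d+1)$, independent of the local view --- chosen so that the product has non-negative coefficients; these factors were found by hand and display no obvious pattern as $d$ grows. A proof of the conjecture would therefore presumably have to replace this device by something structural: for instance, writing the slack of an arbitrary local view as a manifestly non-negative combination of ``elementary'' slacks attached to its sub-configurations, or recasting dual feasibility as an entropy/convexity inequality comparing a general local view with the fully ``spread-out'' one realised inside $K_{d,d}$. A second, related danger is that the $q$-partition conditions --- unnecessary at $d=3$ but essential at $d=4$ --- may simply be too weak for $d\ge5$, in which case one must enlarge the constraint set (e.g.\ conditions on the joint law of $H(u)$ over pairs $u\in N(v)$, or on partitions of the second neighbourhood) before the dual construction can go through. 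I would first test $d=5$ with the generation algorithms of Section~\ref{sec:d4computer}: either the present conditions suffice with a new magic factor, suggesting a pattern to be proved in general, or they fail and the computation reveals exactly which stronger consistency conditions are needed.
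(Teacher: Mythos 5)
The statement you were asked to prove is a \emph{conjecture} (the paper attributes it to Davies, Jenssen, Perkins, Roberts and explicitly states that ``it is open for $d\ge 5$''). The paper does not prove it, and offers no proof for you to compare against: Theorem~\ref{thm:d4} settles only the $d=4$ case, with $d=3$ done in~\cite{davies2016extremes}. Your submission, to its credit, is honest that it is a programme rather than a proof --- you reduce the conjecture to exhibiting a $d$-uniform dual certificate, identify the $|\mathcal S_{d-1,q}|$ local views arising in $K_{d,d}$ as the natural support, and note that the genuine content is a $d$-uniform verification of slack non-negativity. This matches, essentially verbatim, what the paper itself says in the Conclusions (Section~\ref{sec:conc}): the magic factors were found by hand with no visible pattern, the minimal constraint set is not canonical even within the $d=4$ case, the $d=5$ computation is already at the edge of feasibility, and stronger consistency conditions may be required. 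So your ``proposal'' is a correct and reasonably complete account of what a proof would need to do and where the difficulties lie.

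But a correct account of the difficulties is not a proof, and you supply no new idea that would resolve any of them. The two obstacles you name --- finding a structural replacement for the ad hoc magic factors, and determining whether the $q$-partition constraints suffice for $d\ge 5$ or must be enlarged --- are exactly the two the paper leaves open, and you leave them open too. In particular you do not show that a dual-feasible vector supported on the $K_{d,d}$ local views exists for general $d$ (the system of equations you propose to solve is overdetermined even at $d=4$, where consistency had to be checked separately for $q=5$ and $q\ge 6$ with different constraint subsets), nor do you give any argument, inductive or otherwise, for slack non-negativity. The gap is therefore not a subtle flaw in your argument but the absence of any argument where the conjecture actually lives: everything you write is a (correct) restatement of the known framework. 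Had you been asked to summarise the state of the problem you would deserve full marks; as a proof attempt, the entire content of the conjecture remains unaddressed.
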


%Unfortunately, in establishing this conjecture for $d=4$, several unpleasant truths were uncovered. 
%Firstly, the computations required to solve the problem are numerous and slow. 
%We are able to compute that there are more than $7$ million local views in $\hcL_{5,q}$, and it seems that even stating the linear program for $d=5$ is computationally infeasible today. 
%Secondly, there is not a canonical minimal set of the $\mathcal S_{4,q}$ constraints that works for all $q\ge 5$, and no apparent reason why some subsets (of the correct size) work, and some do not for $q=5$ or for $q\ge 6$. 
%It took a substantial amount of experimentation to find the proofs presented here, and for larger values of $d$ such experiments would require even more computation.

For the proof of the case $d=4$ of this conjecture we had to solve several difficulties which were not present in the case $d=3$. 
Firstly, the computations required to solve the problem with this method require substantial computing power, and were only feasible due to the careful choice of efficient algorithms such as \Call{scan}{} to generate $\hcL_{d,q}$.
We are able to compute that (for $q\ge 20$) $\hcL_{5,q}$ contains approximately $7$ million local views, compared to only $3529$ in $\hcL_{4,q}$. 
It seems that even computing the coefficients for the linear program for $d=5$ is computationally infeasible today. 
Secondly, there is not a canonical choice of the minimal set of the $\mathcal S_{4,q}$ constraints that gives the correct result for all $q\ge 5$, and no apparent reason why some subsets (of the correct size) work, and some do not for $q=5$ or for $q\ge 6$. 
It took a substantial amount of experimentation to find the proofs presented here, and for larger values of $d$ such experiments would require even more expensive computation.

Fortunately, there is still hope that these computations might be avoidable.
In~\cite{davies2017independent} a general solution for an analogous problem on matchings was given, with an inductive description (in $d$) of the dual variables and proof that the $K_{d,d}$ yields a dual feasible solution. 
For such a proof here, one would need to find a description of the minimal set of constraints to use, solve for the dual variables, and show dual feasibility in general. 
These appear to be challenging tasks, but now that solutions exist for $d\in\{3,4\}$ perhaps it will be possible to spot the beginnings of a pattern.

It would be interesting to settle the $4$-regular case of the conjecture of Galvin and Tetali~\cite{galvin2004weighted} in full; one still has to prove that~\eqref{eq:Cconj} holds for all $4$-regular $G$ when $q\in\{3,4\}$ (though this is known for bipartite graphs~\cite{galvin2004weighted}). 
The linear program given here does not give $U^{\min}=U^q_{K_{4,4}}(\beta)$ for such $q$ and large $\beta$, hence one must try a stronger program, or perhaps a different approach. 
There are natural ways to strengthen the methods given here, such as adding more constraints (if they exist), or taking larger local views: e.g.\ to include vertices at distance at most $2$ from $v$ with the colours of vertices at exactly distance $3$ revealed to form the boundary.

\section{Acknowledgements}

The author would like to thank Will Perkins and Matthew Jenssen for their careful reading of a draft of this article, and an anonymous referee for comments.
The research leading to these results has received funding from the European Research Council under the European Union's Seventh Framework Programme (FP7/2007-2013) / ERC grant agreement \textnumero{} 339109.

\bibliography{paper2}
\bibliographystyle{habbrv}

\end{document}